\newtheorem{theorem}{Theorem}
\newtheorem{lemma}{Lemma}
\newtheorem{example}{Example}
\newtheorem{remark}{Remark}
\newtheorem{definition}{Definition}
\tikzset{knot_diagram/.style={draw=black, thick, line cap=round}}
\tikzset{
	knot diagram/every strand/.append style={
		thick,
		black
  }
}
\tikzset{knot_arrow/.style={->}}
\tikzset{knot_arrow_rev/.style={<-}}
\title{Hierarchical quandles}
\author{Korablev Ph. G.}
\address{Chelyabinsk State University, Chelyabinsk, Russia; N.N. Krasovsky Institute of Mathematics and Meckhanics, Ekaterinburg, Russia}
\email{korablev@csu.ru}
\date{}
\begin{document}

\begin{abstract}
    We introduce the notion of a hierarchical quandle, which is a generalisation of diquandles and multi-quandles. Using hierarchical quandle colourings, we construct a cocycle invariants for links coloured by quandles.
\end{abstract}

\maketitle

\section{Introduction}

The notion of \emph{dichromatic links} was introduced in \cite{HK, HP}. The link in 3-sphere is called dichromatic if its components are coloured by two different colours. There is a natural extension of this notion --- \emph{$k$-coloured link} (see for example \cite{LS}). This is a link with components coloured by $k$ different colours. In the present paper we use a more general approach. We consider the colouring of the link by any finite quandle. This colouring is a homomorphism from the fundamental quandle of the link to the finite quandle. On the diagrammatic level, each colouring is just a classical colouring of the diagram arcs by elements of the finite quandle that satisfy the consistency condition in the neighbourhood of each crossing. From this point of view, each dichromatic or $k$-coloured link is just a link coloured by a trivial quandle. A similar approach to link colouring was used in \cite{NOR, Y} to construct an enhancement of the Kauffman bracket polynomial.

We introduce the notion of \emph{hierarchical quandle}, which is an algebraic structure with a special set of axioms. Like axioms for quandles motivated by Reidemeister moves of diagrams, axioms of hierarchical quandles are motivated by Reidemeister moves of coloured diagrams. In fact, we consider colourings by hierarchical quandle of diagram arcs coloured by quandle. That's why this structure is called hierarchical, because it's something like a quandle  over another quandle. \emph{Diquandles} (\cite{LSD}) and \emph{multi-quandles} (\cite{LS}) are the simplest examples of hierarchical quandles over trivial quandles.

In this paper we construct cocycle invariants for links coloured by finite quandles. The main concepts of these invariants are the same as for classical cocycle invariants (\cite{CJKLS}). In particular, we describe the method that allows to construct a chain complex for any finite hierarchical quandle.

The structure of the paper is as follows. The main goal of section \ref{Section:HierarchicalQuandles} is to introduce the notion of hierarchical quandles. In the subsection \ref{Subsection:Quandles} we recall the classical notion of a quandle. In subsection \ref{Subsection:FundamentalQuandle} we recall the well-known notion of a fundamental quandle, and also describe the representation of quandles by generators and relations. In subsection \ref{Subsection:QuandleColourings} we recall classical quandle colouring invariants for oriented links. In subsection \ref{Subsection:HierarchicalQuandle} we introduce the notion of hierarchical quandle. In subsection \ref{Subsection:HierarchicalQuandleColourings} we prove that the number of colourings by hierarchical quandle of the diagram of a link coloured by a quandle does not depend on the diagram of this link. So this number is an invariant of coloured links. The multi-set of these numbers (calculated for all colourings of the link) is an invariant of the link.

The section \ref{Section:CocycleInvariants} is devoted to the construction of cocycle invariants for links, coloured by quandle. In subsection \ref{Subsection:ChainGroups} we define chain groups, in subsection \ref{Subsection:BoundaryMaps} --- boundary homomorphisms, and in subsection \ref{Subsection:ChainComplex} --- the chain and the corresponding cochain complexes. In subsection \ref{Subsection:CocycleInvariant} we define the cocycle invariants. These invariants depends on a 2-cocycle of the constructed cochain complex. We prove that these invariants depends only on the cohomology class of the 2-cocycle.

In section \ref{Section:QuandlesForHierarchicalQuandles} we describe the method to construct a quandle from any hierarchical quandle over another quandle. This construction shows that our cocycle invariants are mostly the same as classical cocycle invariants. But there is a difference. If the quandle can be obtained from other quandle and hierarchical quandle, then our approach allows to split the colourings count invariant and cocycle invariant into several summands. In section \ref{Section:FurtherDevelopment} we discuss some tasks and directions for further development of the theory of hierarchical quandles.

\section{Hierarchical quandles}\label{Section:HierarchicalQuandles}

In this section we introduce the notion of hierarchical quandles and prove that the number of colourings of the link diagram by its elements is an invariant.

\subsection{Quandles}\label{Subsection:Quandles} The notion of quandle was introduced by Joice (\cite{J}) and Matveev (\cite{M}, he called it \emph{distributive groupoid}).

\begin{definition}
    \label{Definition:Quandle}
    The set $X$ with the binary operation $*\colon X\times X\to X$ is called \emph{quandle} if it satisfies to the following conditions:
    \begin{enumerate}
        \item For any $x\in X$: $x * x = x$;
        \item For any $a, b\in X$ there exists a unique $x\in X$ such that $x * a = b$;
        \item For any $x_1, x_2, x_3\in X$: $(x_1 * x_2) * x_3 = (x_1 * x_3) * (x_2 * x_3)$.
    \end{enumerate}

    If $(X, *)$ only satisfies the second and third conditions, then it is called \emph{rack}.
\end{definition}

The second axiom states that the equation $x * a = b$ for any $a, b\in X$ has a unique solution $x\in X$. Denote this solution by $x = b/a$. It's easy to see that $(X, /)$ is also a quandle.

\begin{definition}
    Let $(X, *)$ and $(A, \circ)$ be two quandles. The map $f\colon X\to A$ is called \emph{homomorphism} if for any $x_1, x_2\in X$: $f(x_1 * x_2) = f(x_1)\circ f(x_2)$. The homomorphism is called \emph{isomorphism} if it's a bijection.
\end{definition}

\begin{example}
    The simplest examples of quandles are the following.
    \begin{enumerate}
        \item \emph{Trivial quandle}. Let $X$ be a set. Define the quandle operation $*\colon X\times X\to X$ by the formula: $x_1 * x_2 = x_1$ for any $x_1, x_2\in X$.
        \item \emph{Conjugation quandle}. Let $G$ be a group. Define the quandle operation $*\colon G\times G\to G$ by the formula $x_1 * x_2 = x_2^{-1} x_1 x_2$ for any $x_1, x_2\in G$.
    \end{enumerate}
\end{example}

\subsection{Fundamental quandle}\label{Subsection:FundamentalQuandle} We will use the approach from \cite{M}. Each quandle can be represented by generators and relations. Let $A$ be a set, it will play the role of the alphabet. The word in the alphabet $A$ is a finite sequence of the elements of $A$ and the symbols $(, ), *, /$. Define the set of \emph{admissible} words inductively as follows:
\begin{enumerate}
    \item Any one-symbol word $a\in A$ is admissible;
    \item If the words $w_1$ and $w_2$ are admissible then the words $(w_1) * (w_2)$ and $(w_1) / (w_2)$ are also admissible.
\end{enumerate}

Denote the set of all admissible words in the alphabet $A$ by $W$. 

Let $R$ be a set of relations over the set $W$, i.e. the set of formal equations $r = s$, where $r, s\in W$. Define equivalence on the set $W$ as follows: two admissible words $w_1$ and $w_2$ are equivalent iff the word $w_1$ can be transformed into $w_2$ by the finite sequence of the following operations:
\begin{enumerate}
    \item $x * x \longleftrightarrow x$;
    \item $(x * y) / y \longleftrightarrow x$;
    \item $(x / y) * y \longleftrightarrow x$;
    \item $(x * y) * z \longleftrightarrow (x * z) * (y * z)$;
    \item $r \longleftrightarrow s$.
\end{enumerate}
Here $x, y, z\in W$ and $r, s$ are words in relations of the form $r = s$ from $R$. It's clear that the set of equivalence classes forms a quandle. Denote it $\Gamma_{A, R}$.

Let $K$ be an oriented knot or link, and let $D$ be a diagram of the link $K$. Denote $\mathcal{A}(D)$ the set of arcs of the diagram $D$ (i.e. the parts of the diagram between undercrossings). Define the quandle $\Gamma_{D}$ using generators and relations as follows. The set of generators is $\mathcal{A}(D)$. For each crossing of the diagram $D$ define the relation $a_1 * a_2 = a_3$, where $a_1, a_2, a_3$ are arcs as shown in the figure \ref{Figure:Crossings} (on the left for the positive crossing and on the right for the negative one). Denote the set of all these relations by $\mathcal{R}(D)$.

\begin{figure}[h]
    \begin{center}
        \begin{tikzpicture}[scale=0.75]
            \draw[knot_arrow, knot_diagram] (-1, -1) node[below left] {$a_2$} -- (1, 1);
            \draw[knot_diagram] (1, -1) node[below right] {$a_1$} -- (0.1, -0.1);
            \draw[knot_arrow, knot_diagram] (-0.1, 0.1) -- (-1, 1) node[above left] {$a_3$};
        \end{tikzpicture}
        \hspace{2cm}
        \begin{tikzpicture}[scale=0.75]
            \draw[knot_arrow, knot_diagram] (1, -1) node[below right] {$a_2$} -- (-1, 1);
            \draw[knot_diagram] (-1, -1) node[below left] {$a_3$} -- (-0.1, -0.1);
            \draw[knot_arrow, knot_diagram] (0.1, 0.1) -- (1, 1) node[above right] {$a_1$};
        \end{tikzpicture}
    \end{center}
    \caption{\label{Figure:Crossings}Arcs in the neighbourhood of positive (on the left) and negative (on the right) crossings}
\end{figure}
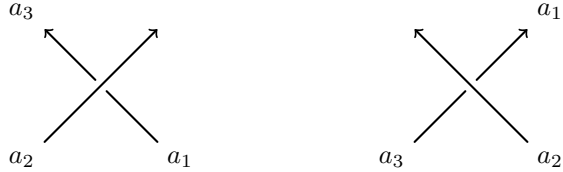

\begin{definition}
    Let $K$ be an oriented knot or link, and let $D$ be a diagram of the link $K$. The quandle $\Gamma_{\mathcal{A}(D), \mathcal{R}(D)}$ is called the \emph{fundamental quandle} of the link $K$.
\end{definition}

It can be proved that the fundamental quandle of the link does not depend on the diagram of the link. So the fundamental quandle $\Gamma_K$ is an invariant of the link $K$.

\subsection{Quandle colourings invariant}\label{Subsection:QuandleColourings} Here we will recall definition of the classical colourings counting invariant for oriented links.

\begin{definition}
    Let $\mathcal{X} = (X, *)$ be a finite quandle, and let $K$ be an oriented knot or link with a diagram $D$. The map $\xi\colon \mathcal{A}(D)\to X$ is called \emph{colouring} of the diagram $D$ if in the neighbourhood of each crossing of the diagram $D$: $\xi(a_1) * \xi(a_2) = \xi(a_3)$, where $a_1, a_2, a_3$ are arc, shown on the figure \ref{Figure:Crossings}.
\end{definition}

Denote the set of all colourings of the diagram $D$ by $C_{\mathcal{X}}(D)$.

Each colouring is a homomorphism from the fundamental quandle $\Gamma_{K}$ to the finite quandle $\mathcal{X}$. Denote the set of all these homomorphisms by $C_{\mathcal{X}}(K)$. Thus, first, the cardinality of the set $|C_{\mathcal{X}}(K)|$ is a link invariant, and second, there is a natural bijection between the sets $C_{\mathcal{X}}(D_1)$ and $C_{\mathcal{X}}(D_2)$ for any two diagrams $D_1, D_2$ of the same link $K$.

If $\mathcal{T} = (X, *)$ is the finite trivial quandle, then $|C_{\mathcal{T}}(K)| = |K|^{|X|}$, where $|K|$ is the number of components of the link $K$. If $\mathcal{G} = (G, *)$ is the conjugation quandle for the finite group $G$, then $|C_{\mathcal{G}}(K)|$ is equal to the number of homomorphisms from the knot group $\pi_{1}(K)$ to the group $G$.

\subsection{Hierarchical quandles}\label{Subsection:HierarchicalQuandle} In this small subsection we will give a definition of the hierarchical quandle.

\begin{definition}
    Let $\mathcal{X} = (X, *)$ be a finite quandle, $X = \{x_1, \ldots, x_n\}$. The \emph{hierarchical quandle} over the quandle $\mathcal{X}$ is a set $Y$ with a family of binary operations $*_{x_1}^{x_2}\colon Y\times Y\to Y$, indexed by all pairs $(x_1, x_2)\in X\times X$, satisfying the following conditions:
    \begin{enumerate}
        \item For any $x\in X$ and any $y\in Y$: $y *_{x}^{x} y = y$;
        \item For any $x_1, x_2\in X$ and any $a, b\in Y$ there exists unique $y\in Y$ such that $y *_{x_1}^{x_2} a = b$;
        \item For any $x_1, x_2, x_3\in X$ and any $y_1, y_2, y_3\in Y$: $$(y_1 *_{x_1}^{x_2} y_2) *_{x_1 *x_2}^{x_3} y_3 = (y_1 *_{x_1}^{x_3} * y_3) *_{x_1 * x_3}^{x_2 * x_3} (y_2 *_{x_2}^{x_3} y_3).$$
    \end{enumerate}
\end{definition}

It follows from the definition that for any $x\in X$: $(Y, *_{x}^{x})$ is a quandle.

The unique solution of the equation from the second axiom denote by $y = b /_{x_1}^{x_2} a$;

\subsection{Hierarchical quandle colourings invariant}\label{Subsection:HierarchicalQuandleColourings} Let $\mathcal{X} = (X, *)$ be a finite quandle. By \emph{coloured oriented link} we mean the pair $(K, \varphi)$, where $\varphi\colon \Gamma_{K} \to X$ is a homomorphism. If the link $K$ is defined by the diagram $D$, then the homomorphism $\varphi$ is defined by the colouring $\xi\colon\mathcal{A}(D)\to X$. We will say that the pair $(D, \xi)$ is a \emph{diagram} of the coloured link $(K, \varphi)$.

\begin{definition}
    Let $\mathcal{X} = (X, *)$ be a finite quandle, $(K, \varphi)$ an oriented link coloured by $\mathcal{X}$, $(D, \xi)$ a diagram of $(K, \varphi)$, and let $\mathcal{Y} = (Y, \{*_{x_1}^{x_2}\}_{x_1, x_2\in X})$ be a finite hierarchical quandle over $\mathcal{X}$. The \emph{colouring} of the diagram $(D, \xi)$ by the hierarchical quandle $\mathcal{Y}$ is a map $\zeta\colon \mathcal{A}(D)\to Y$ such that in the neighbourhood of each crossing of the diagram $D$: $\zeta(a_3) = \zeta(a_1) *_{x_1}^{x_2} \zeta(a_2)$, where $a_1, a_2, a_3$ are arcs in the neighbourhood of the crossing (see figure \ref{Figure:Crossings}), and $x_1 = \xi(a_1), x_2 = \xi(a_2)$.
\end{definition}

\begin{remark}
    For any colouring $\zeta$ of the coloured diagram $(D, \xi)$ in the neighbourhood of the crossing we have two colourings of the diagram arcs: colours from the quandle $\mathcal{X}$ and colours from the hierarchical quandle $\mathcal{Y}$. To distinguish these colours on figures, we draw the quandle colours in rectangles (see figure \ref{Figure:HierarchicalColours})
\end{remark}

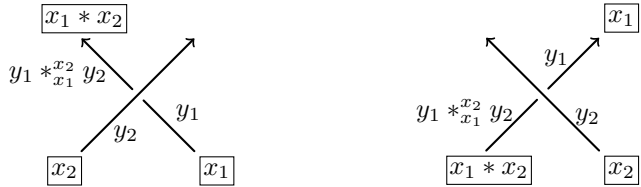
\begin{figure}[h]
    \begin{center}
        \begin{tikzpicture}[scale=0.75]
            \draw[knot_arrow, knot_diagram] (-1, -1) -- (1, 1) node[pos=0.15, right, xshift=2.0] {$y_2$};
            \draw (-1.6, -1.6) rectangle ++(0.6, 0.5) node[pos=.5] {$x_2$};

            \draw[knot_diagram] (1, -1) -- (0.1, -0.1) node[above right, pos=0.4, xshift=-3.0] {$y_1$};
            \draw (1.1, -1.6) rectangle ++(0.6, 0.5) node[pos=.5] {$x_1$};

            \draw[knot_arrow, knot_diagram] (-0.1, 0.1) -- (-1, 1) node[pos=0.3, left] {$y_1 *_{x_1}^{x_2} y_2$};
            \draw (-1.7, 1.1) rectangle ++(1.5, 0.5) node[pos=.5] {$x_1 * x_2$};
        \end{tikzpicture}
        \hspace{2cm}
        \begin{tikzpicture}[scale=0.75]
            \draw[knot_arrow, knot_diagram] (1, -1) -- (-1, 1) node[pos=0.3, right] {$y_2$};
            \draw (1.1, -1.6) rectangle ++(0.6, 0.5) node[pos=.5] {$x_2$};

            \draw[knot_diagram] (-1, -1) -- (-0.1, -0.1) node[pos=0.75, left] {$y_1 *_{x_1}^{x_2} y_2$};
            \draw (-1.7, -1.6) rectangle ++(1.5, 0.5) node[pos=.5] {$x_1 * x_2$};

            \draw[knot_arrow, knot_diagram] (0.1, 0.1) -- (1, 1) node[pos=0.6, left] {$y_1$};
            \draw (1.1, 1.1) rectangle ++(0.6, 0.5) node[pos=.5] {$x_1$};
        \end{tikzpicture}
    \end{center}
    \caption{\label{Figure:HierarchicalColours}Colours of arcs in the neighbourhood of crossings}
\end{figure}

Denote by $C_{\mathcal{Y}}(D, \xi)$ the set of all colourings of the diagram $(D, \xi)$ by the hierarchical quandle $\mathcal{Y}$.

\begin{theorem}
    \label{Theorem:HeirarchicalColourings}
    Let $\mathcal{X} = (X, *)$ be a finite quandle, $(K, \varphi)$ be an oriented link coloured by $\mathcal{X}$, and let $\mathcal{Y} = (Y, \{*_{x_1}^{x_2}\}_{x_1, x_2\in X})$ be a finite hierarchical quandle over $\mathcal{X}$. Then for any two diagrams $D_1, D_2$ of the link $K$ with colourings $\xi_1, \xi_2$ there is a natural bijection between the sets $C_{\mathcal{Y}}(D_1, \xi_1)$ and $C_{\mathcal{Y}}(D_2, \xi_2)$.
\end{theorem}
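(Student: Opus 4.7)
The plan is to reduce the theorem to invariance under Reidemeister moves. Since $D_1$ and $D_2$ are diagrams of the same link $K$, they are connected by a finite sequence of Reidemeister moves; because each move is performed inside a small disk $B$ and leaves the diagram unchanged outside $B$, the underlying quandle colourings $\xi_1, \xi_2$ are related by the induced sequence of coloured Reidemeister moves (colours outside $B$ are preserved, and colours inside $B$ are determined by the relations at the crossings, using that $\xi_i$ descends from the fundamental quandle $\Gamma_K$). It therefore suffices to prove that for a single coloured Reidemeister move the restriction of a hierarchical colouring $\zeta$ to arcs crossing $\partial B$ extends uniquely to a hierarchical colouring on each side of the move; concatenating the resulting local bijections along the given sequence yields the bijection $C_{\mathcal{Y}}(D_1, \xi_1) \leftrightarrow C_{\mathcal{Y}}(D_2, \xi_2)$ that the theorem asserts.

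The three axioms of a hierarchical quandle are precisely what is needed for the three moves. For R1, introducing or removing a kink on an arc of quandle colour $x$ and hierarchical colour $y$ requires $y *_x^x y = y$, which is axiom 1. For R2, cancelling a positive-negative pair of crossings between strands of quandle colours $x_1, x_2$ requires that, given the hierarchical colours $y_1, y_2$ of the two outer strands, the short intermediate arc carry the unique colour $y$ satisfying $y *_{x_1}^{x_2} y_2 = y_1$; existence and uniqueness is axiom 2, and it also supplies the inverse operation $/_{x_1}^{x_2}$ that produces the explicit formula for the bijection.

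The R3 move is the main obstacle. Three strands of quandle colours $x_1, x_2, x_3$ cross pairwise, and on each side of the move we compute the hierarchical colours of the three outgoing arcs from the three incoming colours $y_1, y_2, y_3$ by successive applications of the operations $*_{\cdot}^{\cdot}$. Two of the three outgoing arcs receive the same colour on both sides by direct inspection, while the third --- the arc of quandle colour $(x_1 * x_2) * x_3 = (x_1 * x_3) * (x_2 * x_3)$ --- receives exactly the two sides of axiom 3 as its hierarchical colour, so the two computations agree. Checking this for every signed and oriented variant of R3 is the routine but non-trivial core of the argument; as in the classical quandle case it reduces to a small number of representative configurations, the remaining variants being obtained by combining axiom 3 with axiom 2 to replace positive crossings by negative ones. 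Assembling the three local bijections into a single map along the chosen Reidemeister sequence produces the desired natural bijection, completing the proof.
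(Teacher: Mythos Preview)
Your proposal is correct and follows essentially the same approach as the paper: reduce to Reidemeister moves, then match the three hierarchical quandle axioms to R1, R2, R3 respectively to obtain unique extension of colourings across each move. The only minor difference is that the paper invokes Polyak's minimal generating set of five oriented Reidemeister moves (two R1, two R2, one R3) to avoid checking all signed and oriented variants of R3, whereas you propose handling the extra variants by combining axioms 2 and 3; both routes work.
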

\begin{proof}
    The proof of the theorem is typical and straightforward. It's enough to check the theorem statement only for the cases where the diagram $D_2$ is obtained from the diagram $D_1$ by one of the five Reidemeister moves shown on the figures \ref{Figure:R1}, \ref{Figure:R2} and \ref{Figure:R3} (\cite{Po}). It's easy to check that in all these cases, for any colouring of the diagram before the move, there is only one colouring after the move that matches the original colouring outside the move area. For the first Reidemeister moves this follows from the first axiom of the hierarchical quandle, for the second moves --- from the second axiom and for the third Reidemeister move --- from the third axiom.
\end{proof}

\begin{figure}[h]
    \begin{tikzpicture}[scale=1.0, use Hobby shortcut, baseline={([yshift=-1.5ex]current bounding box.center)}]
        \begin{knot}[
            clip width=5,
            consider self intersections=true,
            ignore endpoint intersections=false,
            flip crossing/.list={1}
        ]
            \strand (0, -1) .. (0, -0.25) .. (0.375, 0.35) .. (0.75, 0) .. (0.375, -0.35) .. (0, 0.25) .. (0, 0.5);
        \end{knot}
        \draw[knot_diagram, knot_arrow] (0, 0.5) -- (0, 1.0);

        \draw (-0.15, -1.4) rectangle ++(0.3, 0.3) node[pos=0.5] {$x$};
        \draw (-0.15, 1.1) rectangle ++(0.3, 0.3) node[pos=0.5] {$x$};
        \draw (0, -0.5) node[left] {$y$};
        \draw (0, 0.5) node[left] {$y$};
    \end{tikzpicture}
    $\longleftrightarrow$
    \begin{tikzpicture}[scale=1.0, baseline={([yshift=0.0ex]current bounding box.center)}]
        \draw[knot_diagram, knot_arrow] (0, -1) -- (0, 1) node[pos=0.5, right] {$y$};
        \draw (-0.15, -1.4) rectangle ++(0.3, 0.3) node[pos=0.5] {$x$};
    \end{tikzpicture}
    $\longleftrightarrow$
    \begin{tikzpicture}[scale=1.0, use Hobby shortcut, baseline={([yshift=-1.5ex]current bounding box.center)}]
        \begin{knot}[
            clip width=5,
            consider self intersections=true,
            ignore endpoint intersections=false,
        ]
            \strand (0, -1) .. (0, -0.25) .. (0.375, 0.35) .. (0.75, 0) .. (0.375, -0.35) .. (0, 0.25) .. (0, 0.5);
        \end{knot}
        \draw[knot_diagram, knot_arrow] (0, 0.5) -- (0, 1.0);

        \draw (-0.15, -1.4) rectangle ++(0.3, 0.3) node[pos=0.5] {$x$};
        \draw (-0.15, 1.1) rectangle ++(0.3, 0.3) node[pos=0.5] {$x$};
        \draw (0, -0.5) node[left] {$y$};
        \draw (0, 0.5) node[left] {$y$};
    \end{tikzpicture}
    \caption{\label{Figure:R1}Colourings before and after of two versions of the first Reidemeister moves}
\end{figure}
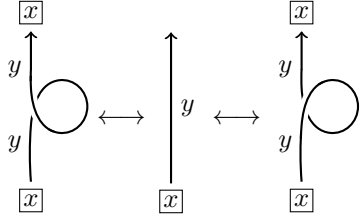

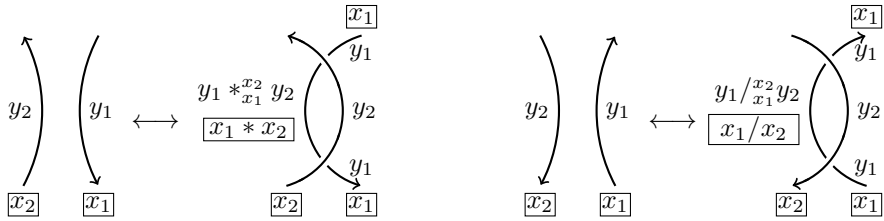
\begin{figure}[h]
    \begin{tikzpicture}[use Hobby shortcut, baseline={([yshift=0.0ex]current bounding box.center)}]
        \begin{knot}
            \strand[knot_arrow] (-0.5, -1) .. (-0.25, 0) .. (-0.5, 1);
            \strand[knot_arrow_rev] (0.5, -1) .. (0.25, 0) .. (0.5, 1);
        \end{knot}
        \draw (-0.7, -1.4) rectangle ++(0.4, 0.3) node[pos=0.5] {$x_2$};
        \draw (0.3, -1.4) rectangle ++(0.4, 0.3) node[pos=0.5] {$x_1$};
        \draw (-0.25, 0.0) node[left] {$y_2$};
        \draw (0.25, 0.0) node[right] {$y_1$};
    \end{tikzpicture}
    $\longleftrightarrow$
    \begin{tikzpicture}[use Hobby shortcut, baseline={([yshift=-1.5ex]current bounding box.center)}]
        \begin{knot}[
            clip width=5,
            ignore endpoint intersections=false,
        ]
            \strand[knot_arrow] (-0.5, -1.0) .. (0.25, 0.0) .. (-0.5, 1.0);
            \strand[knot_arrow_rev] (0.5, -1.0) .. (-0.25, 0.0) .. (0.5, 1.0);
        \end{knot}
        \draw (-0.7, -1.4) rectangle ++(0.4, 0.3) node[pos=0.5] {$x_2$};
        \draw (0.3, -1.4) rectangle ++(0.4, 0.3) node[pos=0.5] {$x_1$};
        \draw (0.3, 1.1) rectangle ++(0.4, 0.3) node[pos=0.5] {$x_1$};
        \draw (-0.25, 0.25) node[left] {$y_1 *_{x_1}^{x_2} y_2$};
        \draw (-1.6, -0.4) rectangle ++(1.2, 0.3) node[pos=0.5] {$x_1 * x_2$};
        \draw (0.5, 1.0) node[below] {$y_1$};
        \draw (0.5, -1.0) node[above] {$y_1$};
        \draw (0.25, 0.0) node[right] {$y_2$};
    \end{tikzpicture}
    \hspace{1.5cm}
    \begin{tikzpicture}[use Hobby shortcut, baseline={([yshift=0.0ex]current bounding box.center)}]
        \begin{knot}
            \strand[knot_arrow_rev] (-0.5, -1) .. (-0.25, 0) .. (-0.5, 1);
            \strand[knot_arrow] (0.5, -1) .. (0.25, 0) .. (0.5, 1);
        \end{knot}
        \draw (-0.7, -1.4) rectangle ++(0.4, 0.3) node[pos=0.5] {$x_2$};
        \draw (0.3, -1.4) rectangle ++(0.4, 0.3) node[pos=0.5] {$x_1$};
        \draw (-0.25, 0.0) node[left] {$y_2$};
        \draw (0.25, 0.0) node[right] {$y_1$};
    \end{tikzpicture}
    $\longleftrightarrow$
    \begin{tikzpicture}[use Hobby shortcut, baseline={([yshift=-1.5ex]current bounding box.center)}]
        \begin{knot}[
            clip width=5,
            ignore endpoint intersections=false,
        ]
            \strand[knot_arrow_rev] (-0.5, -1.0) .. (0.25, 0.0) .. (-0.5, 1.0);
            \strand[knot_arrow] (0.5, -1.0) .. (-0.25, 0.0) .. (0.5, 1.0);
        \end{knot}
        \draw (-0.7, -1.4) rectangle ++(0.4, 0.3) node[pos=0.5] {$x_2$};
        \draw (0.3, -1.4) rectangle ++(0.4, 0.3) node[pos=0.5] {$x_1$};
        \draw (0.3, 1.1) rectangle ++(0.4, 0.3) node[pos=0.5] {$x_1$};
        \draw (-0.25, 0.25) node[left] {$y_1 /_{x_1}^{x_2} y_2$};
        \draw (-1.6, -0.45) rectangle ++(1.2, 0.4) node[pos=0.5] {$x_1 / x_2$};
        \draw (0.5, 1.0) node[below] {$y_1$};
        \draw (0.5, -1.0) node[above] {$y_1$};
        \draw (0.25, 0.0) node[right] {$y_2$};
    \end{tikzpicture}
    \caption{\label{Figure:R2}Colouring before and after two versions of the second Reidemeister move}
\end{figure}

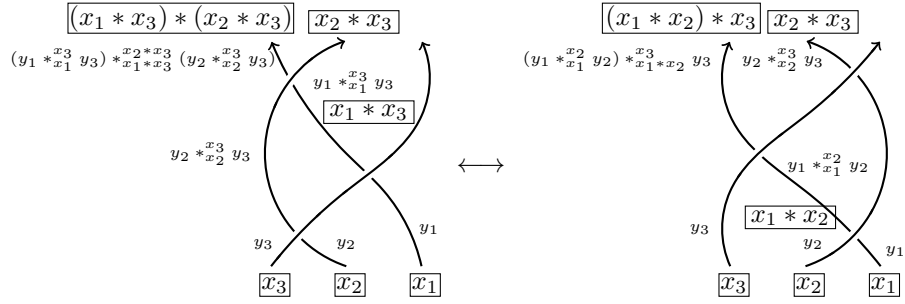
\begin{figure}[h]
    \begin{center}
        \begin{tikzpicture}[use Hobby shortcut, baseline={([yshift=-2.0ex]current bounding box.center)}]
            \begin{knot}[
                clip width=5,
            ]
                \strand[knot_arrow] (0, 0) .. (1, 1) .. (2, 2) .. (2, 3);
                \strand[knot_arrow] (1, 0) .. (0, 1) .. (0, 2) .. (1, 3);
                \strand[knot_arrow] (2, 0) .. (1.5, 1) .. (1, 1.5) .. (0, 3);
            \end{knot}
            \draw (-0.15, -0.4) rectangle ++(0.4, 0.3) node[pos=0.5] {$x_3$};
            \draw (0.85, -0.4) rectangle ++(0.4, 0.3) node[pos=0.5] {$x_2$};
            \draw (1.85, -0.4) rectangle ++(0.4, 0.3) node[pos=0.5] {$x_1$};

            \draw (0.5, 3.1) rectangle ++(1.2, 0.3) node[pos=0.5] {$x_2 * x_3$};
            \draw (-2.7, 3.1) rectangle ++(3.0, 0.4) node[pos=0.5] {$(x_1 * x_3) * (x_2 * x_3)$};

            \draw (2.1, 0.5) node {{\tiny$y_1$}};
            \draw (1.0, 0.3) node {{\tiny$y_2$}};
            \draw (-0.1, 0.3) node {{\tiny$y_3$}};

            \draw (1.1, 2.45) node {{\tiny$y_1 *_{x_1}^{x_3} y_3$}};
            \draw (0.7, 1.9) rectangle ++(1.2, 0.3) node[pos=0.5] {$x_1 * x_3$};

            \draw (-0.8, 1.5) node {{\tiny$y_2 *_{x_2}^{x_3} y_3$}};

            \draw (-1.7, 2.75) node {{\tiny $(y_1 *_{x_1}^{x_3} y_3) *_{x_1 * x_3}^{x_2 * x_3} (y_2 *_{x_2}^{x_3} y_3)$}};
        \end{tikzpicture}
        $\longleftrightarrow$
        \begin{tikzpicture}[use Hobby shortcut, baseline={([yshift=-2.0ex]current bounding box.center)}]
            \begin{knot}[
                clip width=5,
            ]
                \strand[knot_arrow] (0, 0) .. (0, 1) .. (1, 2) .. (2, 3);
                \strand[knot_arrow] (1, 0) .. (2, 1) .. (2, 2) .. (1, 3);
                \strand[knot_arrow] (2, 0) .. (1, 1) .. (0, 2) .. (0, 3);
            \end{knot}
            \draw (-0.15, -0.4) rectangle ++(0.4, 0.3) node[pos=0.5] {$x_3$};
            \draw (0.85, -0.4) rectangle ++(0.4, 0.3) node[pos=0.5] {$x_2$};
            \draw (1.85, -0.4) rectangle ++(0.4, 0.3) node[pos=0.5] {$x_1$};

            \draw (0.5, 3.1) rectangle ++(1.2, 0.3) node[pos=0.5] {$x_2 * x_3$};
            \draw (-1.7, 3.1) rectangle ++(2.1, 0.4) node[pos=0.5] {$(x_1 * x_2) * x_3$};

            \draw (2.2, 0.2) node {{\tiny$y_1$}};
            \draw (1.1, 0.3) node {{\tiny$y_2$}};
            \draw (-0.4, 0.5) node {{\tiny$y_3$}};

            \draw (1.3, 1.35) node {{\tiny$y_1 *_{x_1}^{x_2} y_2$}};
            \draw (0.2, 0.5) rectangle ++(1.2, 0.3) node[pos=0.5] {$x_1 * x_2$};

            \draw (0.7, 2.75) node {{\tiny$y_2 *_{x_2}^{x_3} y_3$}};

            \draw (-1.5, 2.75) node {{\tiny$(y_1 *_{x_1}^{x_2} y_2) *_{x_1 * x_2}^{x_3} y_3$}};
        \end{tikzpicture}
    \end{center}
    \caption{\label{Figure:R3}Colouring before and after the third Reidemeister move}
\end{figure}

It follows from the theorem that the cardinality of the set $C_{\mathcal{Y}}(D, \xi)$ does not depend on the diagram $(D, \xi)$ of the coloured oriented link $(K, \varphi)$. So this value is an invariant of the coloured link $(K, \varphi)$. Furthermore, the multi-set $\{|C_{\mathcal{Y}}(K, \varphi)|, \varphi\in C_{\mathcal{X}(K)}\}$ for all different colourings of the link $K$ is an invariant of the link $K$.

\section{Cocycle invariants}\label{Section:CocycleInvariants}

Throughout this section we will fix the finite quandle $\mathcal{X} = (X, *)$, the finite hierarchical quandle $\mathcal{Y} = (Y, \{*_{x_1}^{x_2}\}_{x_1, x_2\in X})$ over $\mathcal{X}$, and the associative ring $\mathbb{K}$ with unit. The main goal of this section is to construct a cocycle invariant for coloured links similar to the classical cocycle invariants from \cite{CJKLS}.

\subsection{Chain groups.}\label{Subsection:ChainGroups} For any $x_1, \ldots, x_n\in X$ denote $V_{n}^{x_1, \ldots, x_n}(\mathcal{X}, \mathcal{Y})$ the free module over $\mathbb{K}$ generated by $n$-tuples $(y_1, \ldots, y_n)\in Y^{n}$. All modules $V_n^{x_1, \ldots, x_n}(\mathcal{X}, \mathcal{Y})$ are isomorphic. The labels $x_1, \ldots, x_n$ are needed to distinguish these modules. Define $$V_n(\mathcal{X}, \mathcal{Y}) = \bigoplus_{x_1, \ldots, x_n \in X}V^{x_1, \ldots, x_n}_n(\mathcal{X}, \mathcal{Y}).$$

It's clear that $\dim V^{x_1, \ldots, x_n}_n(\mathcal{X}, \mathcal{Y}) = |Y|^n$ for any $x_1, \ldots, x_n\in X$, and hence $\dim V_n(\mathcal{X}, \mathcal{Y}) = |X|^n\cdot |Y|^n$. If $n\leqslant 0$, postulate $V_n (\mathcal{X}, \mathcal{Y}) = 0$.

Denote the standard basis of the module $V_n(\mathcal{X}, \mathcal{Y})$ by $$\{(y_1, \ldots, y_n)^{x_1, \ldots, x_n} | y_1, \ldots, y_n\in Y, x_1, \ldots, x_n\in X\}.$$

\subsection{Boundary maps}\label{Subsection:BoundaryMaps} Let $n\geqslant 1$. For each $i\in \{1, \ldots, n\}$ define two maps $\lambda_{n, i}, \rho_{n, i}\colon V_n(\mathcal{X}, \mathcal{Y})\to V_{n - 1}(\mathcal{X}, \mathcal{Y})$ on the standard basis of $V_n(\mathcal{X}, \mathcal{Y})$ by formulas
\begin{multline*}
    \lambda_{n, i}((y_1, \ldots, y_n)^{x_1, \ldots, x_n}) = \\ = (y_1 *_{x_1}^{x_i} y_i, \ldots, y_{i - 1} *_{x_{i - 1}}^{x_i} y_i, y_{i + 1}, \ldots, y_{n})^{x_1 * x_i, \ldots, x_{i - 1} * x_i, x_{i + 1}, \ldots, x_n},
\end{multline*}
$$
    \rho_{n, i}((y_1, \ldots, y_n)^{x_1, \ldots, x_n}) = (y_1, \ldots, y_{i - 1}, y_{i + 1}, \ldots, y_n)^{x_1, \ldots, x_{i - 1}, x_{i + 1}, \ldots, x_n}.
$$

Denote
\begin{center}
    $l_n = \sum\limits_{i = 1}^{n}(-1)^{i + 1}\lambda_{n, i}$ and $r_n = \sum\limits_{i = 1}^{n}(-1)^{i + 1}\rho_{n, i}$.
\end{center}

Postulate that $l_n = r_n = 0$ for $n\leqslant 0$.

\begin{example}
    \begin{multline*}
        \partial_3((y_1, y_2, y_3)^{x_1, x_2, x_3}) = -(y_1 *_{x_1}^{x_2} y_2, y_3)^{x_1 * x_2, x_3} + \\ + (y_1 *_{x_1}^{x_3} y_3, y_2 *_{x_2}^{x_3} y_3)^{x_1 * x_3, x_2 * x_3} + (y_1, y_3)^{x_1, x_3} - (y_1, y_2)^{x_1, x_2}.
    \end{multline*}
\end{example}

\begin{remark}
    The maps $\lambda_{n, i}$ and $\rho_{n, i}$ have a nice description using tangle diagrams. This approach comes from \cite{Pr} and is used in \cite{K2, K3}.
    
    Consider the tangle $L_{n, i}$ shown on the figure \ref{Figure:TangleL}. If we colour the top points of the tangle $L_{n, i}$ by the quandle colours $x_1, \ldots, x_n$ and by the hierarchical quandle colours $y_1, \ldots, y_n$, then the image $\lambda_{n, i}((y_1, \ldots, y_n)^{x_1, \ldots, x_n})$ is exactly the quandle colours $$x_1 * x_i, \ldots, x_{i - 1} * x_i, x_{i + 1}, \ldots, x_n$$ and hierarchical quandle colours $$y_1 *_{x_1}^{x_i} y_i, \ldots, y_{i - 1} *_{x_{i - 1}}^{x_i} y_i, y_{i + 1}, \ldots, y_{n}$$ of the bottom points of the tangle $L_{n, i}$.

    The map $\rho_{n, i}$ can be described in a similar way. The only difference is the use of the tangle $R_{n, i}$ shown on the figure \ref{Figure:TangleR}.
\end{remark}

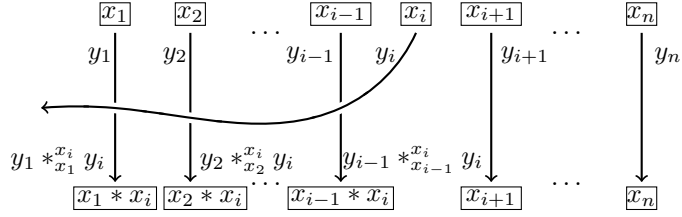
\begin{figure}[h]
    \begin{center}
        \begin{tikzpicture}[use Hobby shortcut]
            \begin{knot}[
                clip width=5,
            ]
                \strand[knot_arrow] (4, 2) .. (3, 1) .. (0, 1) .. (-1, 1);

                \strand[knot_arrow] (0, 2) -- (0, 0);
                \strand[knot_arrow] (1, 2) -- (1, 0);
                \strand[knot_arrow] (3, 2) -- (3, 0);

                \strand[knot_arrow] (5, 2) -- (5, 0);
                \strand[knot_arrow] (7, 2) -- (7, 0);
            \end{knot}
            \draw (2, 2) node {$\ldots$};
            \draw (2, 0) node {$\ldots$};
            \draw (6, 2) node {$\ldots$};
            \draw (6, 0) node {$\ldots$};

            \draw (-0.2, 2.1) rectangle ++(0.4, 0.3) node[pos=0.5] {$x_1$};
            \draw (0.8, 2.1) rectangle ++(0.4, 0.3) node[pos=0.5] {$x_2$};
            \draw (2.6, 2.1) rectangle ++(0.8, 0.3) node[pos=0.5] {$x_{i - 1}$};
            \draw (3.8, 2.1) rectangle ++(0.4, 0.3) node[pos=0.5] {$x_i$};
            \draw (4.6, 2.1) rectangle ++(0.8, 0.3) node[pos=0.5] {$x_{i + 1}$};
            \draw (6.8, 2.1) rectangle ++(0.4, 0.3) node[pos=0.5] {$x_n$};

            \draw (-0.55, -0.35) rectangle ++(1.1, 0.3) node[pos=0.5] {$x_1 * x_i$};
            \draw (0.65, -0.35) rectangle ++(1.1, 0.3) node[pos=0.5] {$x_2 * x_i$};
            \draw (2.3, -0.35) rectangle ++(1.4, 0.3) node[pos=0.5] {$x_{i - 1} * x_i$};
            \draw (4.6, -0.35) rectangle ++(0.8, 0.3) node[pos=0.5] {$x_{i + 1}$};
            \draw (6.8, -0.35) rectangle ++(0.4, 0.3) node[pos=0.5] {$x_n$};

            \draw (-0.2, 1.7) node {$y_1$};
            \draw (0.8, 1.7) node {$y_2$};
            \draw (2.6, 1.7) node {$y_{i - 1}$};
            \draw (3.6, 1.7) node {$y_i$};
            \draw (5.45, 1.7) node {$y_{i + 1}$};
            \draw (7.35, 1.7) node {$y_n$};

            \draw (0.0, 0.3) node[left] {$y_1 *_{x_1}^{x_i} y_i$};
            \draw (1.0, 0.3) node[right] {$y_2 *_{x_2}^{x_i} y_i$};
            \draw (2.9, 0.3) node[right] {$y_{i - 1} *_{x_{i - 1}}^{x_i} y_i$};
        \end{tikzpicture}
    \end{center}
    \caption{\label{Figure:TangleL}Colouring of the tangle $L_{n, i}$}
\end{figure}

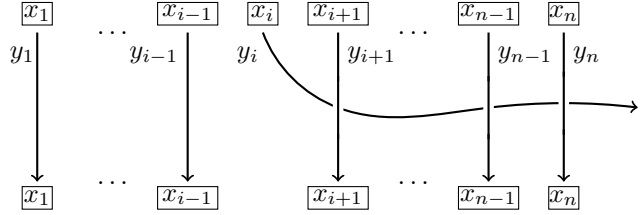
\begin{figure}[h]
    \begin{center}
        \begin{tikzpicture}[use Hobby shortcut]
            \begin{knot}[
                clip width=5,
            ]
                \strand[knot_arrow] (0, 2) -- (0, 0);
                \strand[knot_arrow] (2, 2) -- (2, 0);

                \strand[knot_arrow] (4, 2) -- (4, 0);
                \strand[knot_arrow] (6, 2) -- (6, 0);
                \strand[knot_arrow] (7, 2) -- (7, 0);

                \strand[knot_arrow] (3, 2) .. (4, 1) .. (6, 1) .. (8, 1);
            \end{knot}
            \draw (1, 2) node {$\ldots$};
            \draw (1, 0) node {$\ldots$};
            \draw (5, 2) node {$\ldots$};
            \draw (5, 0) node {$\ldots$};

            \draw (-0.2, 2.1) rectangle ++(0.4, 0.3) node[pos=0.5] {$x_1$};
            \draw (1.6, 2.1) rectangle ++(0.8, 0.3) node[pos=0.5] {$x_{i - 1}$};
            \draw (2.8, 2.1) rectangle ++(0.4, 0.3) node[pos=0.5] {$x_i$};
            \draw (3.6, 2.1) rectangle ++(0.8, 0.3) node[pos=0.5] {$x_{i + 1}$};
            \draw (5.6, 2.1) rectangle ++(0.8, 0.3) node[pos=0.5] {$x_{n - 1}$};
            \draw (6.8, 2.1) rectangle ++(0.4, 0.3) node[pos=0.5] {$x_n$};

            \draw (-0.2, -0.35) rectangle ++(0.4, 0.3) node[pos=0.5] {$x_1$};
            \draw (1.6, -0.35) rectangle ++(0.8, 0.3) node[pos=0.5] {$x_{i - 1}$};
            \draw (3.6, -0.35) rectangle ++(0.8, 0.3) node[pos=0.5] {$x_{i + 1}$};
            \draw (5.6, -0.35) rectangle ++(0.8, 0.3) node[pos=0.5] {$x_{n - 1}$};
            \draw (6.8, -0.35) rectangle ++(0.4, 0.3) node[pos=0.5] {$x_n$};

            \draw (-0.2, 1.7) node {$y_1$};
            \draw (2.0, 1.7) node[left] {$y_{i - 1}$};
            \draw (2.8, 1.7) node {$y_i$};
            \draw (4.0, 1.7) node[right] {$y_{i + 1}$};
            \draw (6.0, 1.7) node[right] {$y_{n - 1}$};
            \draw (7.0, 1.7) node[right] {$y_n$};
        \end{tikzpicture}
    \end{center}
    \caption{\label{Figure:TangleR}Colouring of the tangle $R_{n, i}$}
\end{figure}

\begin{lemma}
    \label{Lemma:LR}
    For all $n\geqslant 1$ the following identities holds:
    \begin{enumerate}
        \item $l_{n - 1}\circ l_{n} = 0$;
        \item $r_{n - 1}\circ r_{n} = 0$;
        \item $l_{n - 1}\circ r_{n} + r_{n - 1}\circ l_{n} = 0$.
    \end{enumerate}
\end{lemma}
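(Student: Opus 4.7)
The plan is to reduce all three identities to the standard alternating-sum cancellation argument used in simplicial homology, after first establishing presimplicial-type identities between the individual face maps $\lambda_{n,i}$ and $\rho_{n,i}$. Concretely, I would verify by direct computation on basis elements $(y_1,\ldots,y_n)^{x_1,\ldots,x_n}$ the following three families of identities for $j<i$: (a) $\rho_{n-1,j}\circ\rho_{n,i}=\rho_{n-1,i-1}\circ\rho_{n,j}$; (b) $\lambda_{n-1,j}\circ\lambda_{n,i}=\lambda_{n-1,i-1}\circ\lambda_{n,j}$; (c) the mixed pair
\[
\lambda_{n-1,j}\circ\rho_{n,i}=\rho_{n-1,i-1}\circ\lambda_{n,j}\ \text{for}\ j<i,\qquad \lambda_{n-1,j}\circ\rho_{n,i}=\rho_{n-1,i}\circ\lambda_{n,j+1}\ \text{for}\ j\geqslant i.
\]

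Identity (a) is the ordinary simplicial identity: both compositions amount to deleting the entries at positions $i$ and $j$. Identity (c) is essentially bookkeeping: one tracks how the pivot index shifts after a deletion, and the ``pivot entry'' and all resulting expressions $y_k\ast_{x_k}^{x_p}y_p$ on both sides coincide because for each surviving $k$ only one of the two operations ($\lambda$-style combination with $y_p$) is non-trivial. The step I expect to be the main obstacle is identity (b): for $k<j<i$ the composition $\lambda_{n-1,j}\circ\lambda_{n,i}$ produces the entry
\[
(y_k\ast_{x_k}^{x_i}y_i)\ast_{x_k\ast x_i}^{x_j\ast x_i}(y_j\ast_{x_j}^{x_i}y_i),
\]
whereas $\lambda_{n-1,i-1}\circ\lambda_{n,j}$ produces
\[
(y_k\ast_{x_k}^{x_j}y_j)\ast_{x_k\ast x_j}^{x_i}y_i,
\]
and these two expressions agree precisely by axiom (3) of the hierarchical quandle applied to $(y_k,y_j,y_i)$ with indices $(x_k,x_j,x_i)$; the accompanying quandle labels $(x_k\ast x_j)\ast x_i=(x_k\ast x_i)\ast(x_j\ast x_i)$ match by axiom (3) for the underlying quandle $\mathcal{X}$. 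For $j<k<i$ and $k>i$ the two sides coincide by inspection.

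With (a)--(c) in hand, the three assertions of the lemma follow by the classical reindexing trick. For $r_{n-1}\circ r_n = \sum_{i,j}(-1)^{i+j}\rho_{n-1,j}\rho_{n,i}$, split the sum into $j<i$ and $j\geqslant i$; applying (a) to the first half and substituting $j':=i-1$, $i':=j$ converts it into the negative of the second half. An identical argument using (b) gives $l_{n-1}\circ l_n=0$. For the mixed identity, I would expand $r_{n-1}\circ l_n = \sum_{i,j}(-1)^{i+j}\rho_{n-1,j}\lambda_{n,i}$ and split according to whether the pivot $i$ is $\leqslant j$ or $>j$; applying the two cases of (c) in reverse and reindexing ($i\mapsto j+1$ respectively $j\mapsto i-1$) produces exactly $-\sum_{i,j}(-1)^{i+j}\lambda_{n-1,j}\rho_{n,i}=-l_{n-1}\circ r_n$, which is the desired relation (3). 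The boundary cases $n=1$ and the vanishing conventions for $n\leqslant 0$ are handled trivially by the postulate $l_n=r_n=0$.
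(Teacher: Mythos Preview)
Your proposal is correct and follows essentially the same route as the paper: establish the presimplicial identities among the face maps $\lambda_{n,i}$ and $\rho_{n,i}$ and then cancel in the alternating double sum. The only cosmetic difference is that the paper justifies the key identity $\lambda_{n-1,j}\circ\lambda_{n,i}=\lambda_{n-1,i-1}\circ\lambda_{n,j}$ (for $j<i$) via the tangle-diagram picture of $\lambda_{n,i}$, whereas you verify it algebraically by invoking axiom~(3) of the hierarchical quandle; the paper then dismisses parts (2) and (3) as ``similar'', while you spell out the mixed relations~(c) and the reindexing explicitly.
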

\begin{proof}
    All three statements of the lemma can be proved in a similar way by using a diagrammatic approach to the maps $\lambda_{n, i}$ and $\rho_{n, i}$. Let's prove the first statement.

    Note that if $j > i$ then $$(\lambda_{n, j - 1}\circ \lambda_{n, i})((y_1, \ldots, y_n)^{x_1, \ldots, x_n}) = (\lambda_{n, i}\circ \lambda_{n, j})((y_1, \ldots, y_n)^{x_1, \ldots, x_n}).$$
    This equality follows from the fact that with fixed colouring of the top points of the tangles shown on the figure \ref{Figure:TanglesIJ} (for the composition $\lambda_{n, j - 1}\circ \lambda_{n, i}$ on the left and for the composition $\lambda_{n, i}\circ \lambda_{n, j}$ on the right) the colours of the bottom points are the same.

    \begin{figure}[h]
        \begin{tikzpicture}[scale=0.6, use Hobby shortcut]
            \begin{knot}[
                clip width=5,
                consider self intersections=true,
		        ignore endpoint intersections=false,
            ]
                \strand[knot_arrow] (3, 3) .. (2, 2) .. (0, 2) .. (-1, 2);
                \strand[knot_arrow] (5, 3) .. (3, 1) .. (0, 1) .. (-1, 1);
                \strand[knot_arrow] (0, 3) .. (0, 0);
                \strand[knot_arrow] (2, 3) .. (2, 0);
                \strand[knot_arrow] (6, 3) .. (6, 0);
                \strand[knot_arrow] (8, 3) .. (8, 0);
            \end{knot}
            \draw (1, 3) node {$\ldots$};
            \draw (4, 3) node {$\ldots$};
            \draw (7, 3) node {$\ldots$};

            \draw (1, 0) node {$\ldots$};
            \draw (4, 0) node {$\ldots$};
            \draw (7, 0) node {$\ldots$};

            \draw (0, 3) node[above] {$1$};
            \draw (3, 3) node[above] {$i$};
            \draw (5, 3) node[above] {$j$};
            \draw (8, 3) node[above] {$n$};
        \end{tikzpicture}
        \hfill
        \begin{tikzpicture}[scale=0.6, use Hobby shortcut]
            \begin{knot}[
                clip width=5,
                consider self intersections=true,
		        ignore endpoint intersections=false,
            ]
                \strand[knot_arrow] (5, 3) .. (3, 2) .. (0, 2) .. (-1, 2);
                \strand[knot_arrow] (3, 3) .. (2, 1) .. (0, 1) .. (-1, 1);
                \strand[knot_arrow] (0, 3) .. (0, 0);
                \strand[knot_arrow] (2, 3) .. (2, 0);
                \strand[knot_arrow] (6, 3) .. (6, 0);
                \strand[knot_arrow] (8, 3) .. (8, 0);
            \end{knot}
            \draw (1, 3) node {$\ldots$};
            \draw (4, 3) node {$\ldots$};
            \draw (7, 3) node {$\ldots$};

            \draw (1, 0) node {$\ldots$};
            \draw (4, 0) node {$\ldots$};
            \draw (7, 0) node {$\ldots$};

            \draw (0, 3) node[above] {$1$};
            \draw (3, 3) node[above] {$i$};
            \draw (5, 3) node[above] {$j$};
            \draw (8, 3) node[above] {$n$};
        \end{tikzpicture}
        \caption{\label{Figure:TanglesIJ}Diagrams of the composition $\lambda_{n, j - 1}\circ \lambda_{n, i}$ (on the left) and $\lambda_{n, i}\circ \lambda_{n, j}$ (on the right)}
    \end{figure}
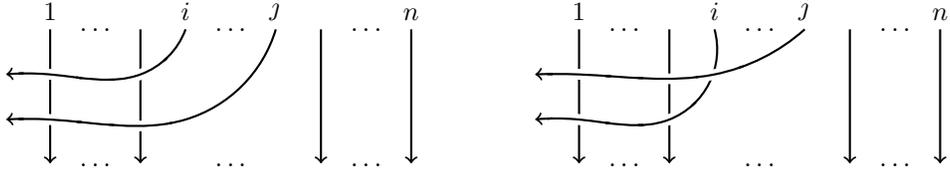

    In the expression $(l_{n - 1}\circ l_n)((y_1, \ldots, y_n)^{x_1, \ldots, x_n})$ as a sum these values $(\lambda_{n, j - 1}\circ \lambda_{n, i})((y_1, \ldots, y_n)^{x_1, \ldots, x_n})$ and $(\lambda_{n, i}\circ \lambda_{n, j})((y_1, \ldots, y_n)^{x_1, \ldots, x_n})$ comes with opposite signs. So, the total sum is zero.

    The second and third statements are similar.
\end{proof}

For any $n\in\mathbb{Z}$ define the homomorphism $\partial_n\colon V_n(\mathcal{X}, \mathcal{Y})\to V_{n - 1}(\mathcal{X}, \mathcal{Y})$ by the formula $\partial_{n} = l_n - r_n$.

\begin{lemma}
    \label{Lemma:Boundary}
    $\partial_{n - 1} \circ \partial_n = 0$.
\end{lemma}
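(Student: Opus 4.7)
The plan is to expand $\partial_{n-1}\circ \partial_n$ using the definition $\partial_n = l_n - r_n$ and then invoke Lemma \ref{Lemma:LR} term by term. Explicitly, I would write
\begin{equation*}
\partial_{n-1}\circ \partial_n = (l_{n-1} - r_{n-1})\circ (l_n - r_n) = l_{n-1}\circ l_n - l_{n-1}\circ r_n - r_{n-1}\circ l_n + r_{n-1}\circ r_n.
\end{equation*}

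The first term vanishes by part (1) of Lemma \ref{Lemma:LR}, the last term vanishes by part (2), and the two middle terms cancel against each other by part (3), which gives $l_{n-1}\circ r_n + r_{n-1}\circ l_n = 0$. Collecting these, all four summands contribute zero, so $\partial_{n-1}\circ \partial_n = 0$.

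There is no genuine obstacle here: the entire content is packaged in Lemma \ref{Lemma:LR}, and the proof is a one-line algebraic manipulation. If anything needed attention it would be just to remark that the boundary cases $n\leqslant 1$ are covered by the convention $l_n = r_n = 0$ for $n\leqslant 0$, so the identity holds trivially there as well.
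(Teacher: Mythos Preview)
Your proof is correct and follows exactly the same approach as the paper: expand $\partial_{n-1}\circ\partial_n$ via $\partial_n = l_n - r_n$ and apply the three parts of Lemma~\ref{Lemma:LR}. In fact your write-up is cleaner, since the paper's displayed expansion contains some evident typos in the subscripts.
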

\begin{proof}
    The lemma statement follows from the lemma \ref{Lemma:LR}. Indeed
    \begin{multline*}
        \partial_{n - 1} \circ \partial_n = (l_n - r_n)\circ (l_{n - 1} - r_{n - 1}) = \\ = l_n\circ r_n - l_n\circ r_{n - 1} - r_n\circ l_{n - 1} + r_n\circ r_{n - 1} = 0.
    \end{multline*}
\end{proof}

So all homomorphisms $\partial_n$ are boundary maps.

\subsection{Chain complex}\label{Subsection:ChainComplex} In each module $V_n^{x_1, \ldots, x_n}(\mathcal{X}, \mathcal{Y})$, $n\geqslant 1$, $x_1, \ldots, x_n\in X$, consider the submodule $W_n^{x_1, \ldots, x_n}(\mathcal{X}, \mathcal{Y})$, generated by those elements $(y_1, \ldots, y_n)^{x_1, \ldots, x_n}$ such that there exists an index $i\in\{1, \ldots, n - 1\}$ such that $y_i = y_{i + 1}$ and $x_i = x_{i + 1}$.

\begin{remark}
    If there are no two equal consequence elements among the elements $x_1, \ldots, x_n$, then the submodule $W_{n}^{x_1, \ldots, x_n}(\mathcal{X}, \mathcal{Y})$ is trivial. Also all submodules $W_1^x(\mathcal{X}, \mathcal{Y})$, $x\in X$ are trivial.
\end{remark}

\begin{example}
    The submodule $W_{3}^{x, x, x'}(\mathcal{X}, \mathcal{Y})$ generated by elements of the form $(y, y, y')^{x, x, x'}$, $y, y'\in Y$, and the submodule $W_{3}^{x', x, x}(\mathcal{X}, \mathcal{Y})$ generated by elements of the form $(y', y, y)^{x', x, x}$, $y, y'\in Y$.
\end{example}

Denote $$W_n(\mathcal{X}, \mathcal{Y}) = \bigoplus_{x_1, \ldots, x_n\in X}W_n^{x_1, \ldots, x_n}(\mathcal{X}, \mathcal{Y}).$$

\begin{lemma}
    \label{Lemma:W}
    For all $n\geqslant 1$:
    \begin{enumerate}
        \item $l_n(W_n(\mathcal{X}, \mathcal{Y}))\subseteq W_{n - 1}(\mathcal{X}, \mathcal{Y})$;
        \item $r_n(W_n(\mathcal{X}, \mathcal{Y}))\subseteq W_{n - 1}(\mathcal{X}, \mathcal{Y})$.
    \end{enumerate}
\end{lemma}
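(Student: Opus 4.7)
The plan is to prove both statements by taking a generator $(y_1,\ldots,y_n)^{x_1,\ldots,x_n}$ of $W_n(\mathcal{X},\mathcal{Y})$ with witnessing index $j\in\{1,\ldots,n-1\}$ (so $y_j=y_{j+1}$ and $x_j=x_{j+1}$), applying the alternating sums $l_n$ and $r_n$, and analyzing the summands $\lambda_{n,i}$, $\rho_{n,i}$ according to the position of $i$ relative to $j$. The terms with $i\notin\{j,j+1\}$ will be shown to land in $W_{n-1}(\mathcal{X},\mathcal{Y})$ individually, while the two terms with $i\in\{j,j+1\}$ will turn out to be equal as elements of $V_{n-1}$ and therefore cancel because they carry opposite signs.

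For the easy summands I would argue as follows. Consider $\rho_{n,i}$ with $i\notin\{j,j+1\}$: removing the $i$-th coordinate leaves $y_j$ and $y_{j+1}$ (resp.\ $x_j$ and $x_{j+1}$) as consecutive entries in the output, so the output still lies in $W_{n-1}$. For $\lambda_{n,i}$ with $i\notin\{j,j+1\}$, either $i<j$ (the $j$-th and $(j{+}1)$-th positions are untouched and remain consecutive and equal) or $i>j+1$ (both positions get replaced by $y_j*_{x_j}^{x_i}y_i$ and $y_{j+1}*_{x_{j+1}}^{x_i}y_i$, which are equal because the hypotheses $y_j=y_{j+1}$, $x_j=x_{j+1}$ make the operations identical; similarly for the $x$-labels). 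In every case the resulting basis element is in $W_{n-1}$.

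For the two remaining summands, I would compute $\rho_{n,j}$ and $\rho_{n,j+1}$ on our generator: deleting the $j$-th coordinate and deleting the $(j{+}1)$-th coordinate produce the same tuple because the two coordinates agree in both $y$ and $x$. Since they appear in $r_n$ with signs $(-1)^{j+1}$ and $(-1)^{j+2}$, they cancel, proving (2). For (1), I would compute $\lambda_{n,j+1}$ using $y_{j+1}=y_j$ and $x_{j+1}=x_j$: for $k<j$ the $k$-th output is $y_k*_{x_k}^{x_{j+1}}y_{j+1}=y_k*_{x_k}^{x_j}y_j$ with label $x_k*x_{j+1}=x_k*x_j$, while the $j$-th output is $y_j*_{x_j}^{x_j}y_j=y_j$ with label $x_j*x_j=x_j$ (using the first axiom of hierarchical quandles and the first quandle axiom). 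The result coincides with $\lambda_{n,j}$ applied to the same generator, and the opposite signs $(-1)^{j+1}$, $(-1)^{j+2}$ force cancellation.

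The only step that requires any delicacy is the $i=j+1$ case for $\lambda_n$, where one must carefully substitute $y_{j+1}\mapsto y_j$ and $x_{j+1}\mapsto x_j$ in every coordinate of the output and invoke the idempotence axioms to match $\lambda_{n,j}$; the rest is essentially bookkeeping. I do not anticipate any genuine obstacle, and the proof in fact mirrors the classical verification that the degeneracy submodules are preserved by the simplicial face maps in the standard quandle chain complex.
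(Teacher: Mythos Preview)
Your proposal is correct and follows essentially the same approach as the paper's proof: separate the face maps by whether the index hits the degenerate pair, show the off-pair terms land in $W_{n-1}$ individually, and show the two on-pair terms coincide and cancel by the alternating signs. Your write-up is in fact more explicit than the paper's (which merely asserts $\lambda_{n,i}=\lambda_{n,i+1}$ without spelling out the use of the idempotence axioms $y*_x^x y=y$ and $x*x=x$), and you also treat $r_n$ in full where the paper just says ``similar''.
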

\begin{proof}
    Prove the first statement, the second is similar.

    Let $(y_1, \ldots, y_n)^{x_1, \ldots, x_n}\in W_n(\mathcal{X}, \mathcal{Y})$, and let $y_i = y_{i + 1}$ and $x_i = x_{i + 1}$ for some $i = 1, \ldots, n - 1$. Then for any $j \neq i, i + 1$: $$\lambda_{n, j}((y_1, \ldots, y_n)^{x_1, \ldots, x_n}) \in W_{n - 1}(\mathcal{X}, \mathcal{Y}).$$ Note that $$\lambda_{n, i}((y_1, \ldots, y_n)^{x_1, \ldots, x_n}) = \lambda_{n, i + 1}((y_1, \ldots, y_n)^{x_1, \ldots, x_n}).$$

    In the expression of the image $l_n((y_1, \ldots, y_n)^{x_1, \ldots, x_n})$ as a sum the summands $\lambda_{n, i}((y_1, \ldots, y_n)^{x_1, \ldots, x_n})$ and $\lambda_{n, i + 1}((y_1, \ldots, y_n)^{x_1, \ldots, x_n})$ comes with opposite signs. Hence $$l_n((y_1, \ldots, y_n)^{x_1, \ldots, x_n})\in W_{n - 1}(\mathcal{X}, \mathcal{Y}).$$
\end{proof}

Let $C_n(\mathcal{X}, \mathcal{Y}) = V_n(\mathcal{X}, \mathcal{Y}) / W_n(\mathcal{X}, \mathcal{Y})$. It follows from the lemma \ref{Lemma:W} that the boundary homomorphisms $\partial_n\colon V_n(\mathcal{X}, \mathcal{Y})\to V_{n - 1}(\mathcal{X}, \mathcal{Y})$ induces a correctly defined homomorphism from $C_n(\mathcal{X}, \mathcal{Y})$ to $C_{n - 1}(\mathcal{X}, \mathcal{Y})$. Denote this homomorphism by the same letter $\partial_n$. Then the sequence of groups and homomorphisms $$\dots \to C_n(\mathcal{X}, \mathcal{Y})\xrightarrow{\partial_n} C_{n - 1}(\mathcal{X}, \mathcal{Y})\to\ldots$$ is a chain complex. Denote it $\mathcal{C}(\mathcal{X}, \mathcal{Y})$. Denote the corresponding cochain complex $\mathcal{C}^*(\mathcal{X}, \mathcal{Y})$ with cochain groups $C^n(\mathcal{X}, \mathcal{Y}) = \{\varphi\colon C_n(\mathcal{X}, \mathcal{Y})\to \mathbb{K}\}$ and coboundary homomorphisms $\partial^n\colon C^{n - 1}(\mathcal{X}, \mathcal{Y})\to C^n(\mathcal{X}, \mathcal{Y})$ defined by the formula $$\partial^n (\omega)((y_1, \ldots, y_n)^{x_1, \ldots, x_n}) = \omega(\partial_n(y_1, \ldots, y_n)^{x_1, \ldots, x_n})$$ for all $\omega\in C^n(\mathcal{X}, \mathcal{Y})$. Denote the cohomology groups of the cochain complex $\mathcal{C}^{*}(\mathcal{X}, \mathcal{Y})$ by $H^n(\mathcal{X}, \mathcal{Y})$.

\begin{remark}
    If $\omega\in C^2(\mathcal{X}, \mathcal{Y})$ is 3-cocycle, i.e. $\omega\in \ker\partial^3$, then the following identities holds:
    \begin{enumerate}
        \item $\omega((y, y)^{x, x}) = 0$ for all $x\in X$ and $y\in Y$;
        \item $\omega((y_1, y_2)^{x_1, x_2}) + \omega((y_1 *_{x_1}^{x_2} y_2, y_3)^{x_1 * x_2, x_3}) = \omega((y_1, y_3)^{x_1, x_3}) + \omega((y_1 *_{x_1}^{x_3} y_3, y_2 *_{x_2}^{x_3} y_3)^{x_1*x_3, x_2*x_3})$ for all $x_1, x_2, x_3\in X$ and $y_1, y_2, y_3\in Y$.
    \end{enumerate}
\end{remark}

\subsection{Cocycle invariant}\label{Subsection:CocycleInvariant} Let $(K, \varphi)$ be the oriented coloured link, $(D, \xi)$ the diagram of this link, and $\zeta\colon\mathcal{A}(D)\to Y$ the colouring of this diagram by the hierarchical quandle $\mathcal{Y}$. Fix a 2-cocycle $\omega\in \ker\partial^3$ of the cochain complex $\mathcal{C}^{*}(\mathcal{X}, \mathcal{Y})$. Then, for each crossing $v$ of the diagram $D$ with neighbourhood arcs $a_1, a_2, a_3\in \mathcal{A}(D)$ (as shown in the figure \ref{Figure:Crossings}), define the weight $$w_{\omega}(v) = \pm \omega((y_1, y_2)^{x_1, x_2}),$$ where $x_i = \xi(a_i)$, $y_i = \zeta(a_i)$, $i = 1, 2$, and we choose the sign $+$ if $v$ is positive, and $-$ if $v$ is negative (figure \ref{Figure:Weight}).

\begin{figure}[h]
    \begin{center}
        \begin{tikzpicture}[scale=0.75]
            \draw[knot_arrow, knot_diagram] (-1, -1) -- (1, 1) node[pos=0.15, right, xshift=2.0] {$y_2$};
            \draw (-1.6, -1.6) rectangle ++(0.6, 0.5) node[pos=.5] {$x_2$};

            \draw[knot_diagram] (1, -1) -- (0.1, -0.1) node[above right, pos=0.4, xshift=-3.0] {$y_1$};
            \draw (1.1, -1.6) rectangle ++(0.6, 0.5) node[pos=.5] {$x_1$};

            \draw[knot_arrow, knot_diagram] (-0.1, 0.1) -- (-1, 1);

            \draw[thin, ->] (0, -1.0) -- (0, -2.0) node[below] {$\omega((y_1, y_2)^{x_1, x_2})$};
        \end{tikzpicture}
        \hspace{2cm}
        \begin{tikzpicture}[scale=0.75]
            \draw[knot_arrow, knot_diagram] (1, -1) -- (-1, 1) node[pos=0.3, right] {$y_2$};
            \draw (1.1, -1.6) rectangle ++(0.6, 0.5) node[pos=.5] {$x_2$};

            \draw[knot_diagram] (-1, -1) -- (-0.1, -0.1);

            \draw[knot_arrow, knot_diagram] (0.1, 0.1) -- (1, 1) node[pos=0.6, left] {$y_1$};
            \draw (1.1, 1.1) rectangle ++(0.6, 0.5) node[pos=.5] {$x_1$};

            \draw[thin, ->] (0, -1.0) -- (0, -2.0) node[below] {$-\omega((y_1, y_2)^{x_1, x_2})$};
        \end{tikzpicture}
    \end{center}
    \caption{\label{Figure:Weight} The weight of the crossing $v$ of the coloured diagram}
\end{figure}
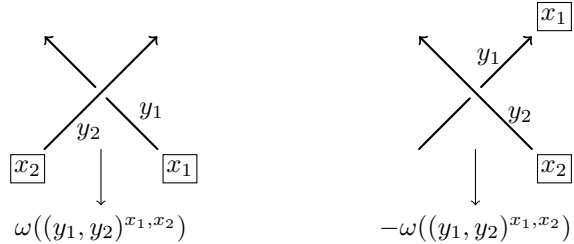

The weight of the coloured diagram $w_{\omega}(D, \xi, \zeta)$ is a sum $$w_{\omega}(D, \xi, \zeta) = \sum\limits_{v} w_{\omega}(v),$$ where the sum is taken over all the crossings $v$ of the diagram $D$.

\begin{theorem}
    \label{Theorem:CocycleInvariant}
    Let $(D_1, \xi_1)$ and $(D_2, \xi_2)$ be two diagrams of the oriented coloured link $(K, \varphi)$, and let $\zeta_i\colon\mathcal{A}(D_i)\to Y$, $i = 1, 2$, be corresponding colourings of these diagrams by the hierarchical quandle $\mathcal{Y}$. Then for any 2-cocycle $\omega\in\ker\partial^3$ $$w_{\omega}(D_1, \xi_1, \zeta_1) = w_{\omega}(D_2, \xi_2, \zeta_2).$$
\end{theorem}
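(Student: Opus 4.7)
The plan is to verify invariance under each of the Reidemeister moves (in all orientation variants), exactly as in the classical cocycle invariant argument from \cite{CJKLS}. By Theorem \ref{Theorem:HeirarchicalColourings}, any colouring $\zeta_1$ of $(D_1,\xi_1)$ corresponds uniquely to a colouring $\zeta_2$ of $(D_2,\xi_2)$ along any sequence of Reidemeister moves, so it suffices to check that $w_{\omega}$ is unchanged by a single move, with compatible colourings on both sides as displayed in figures \ref{Figure:R1}, \ref{Figure:R2}, \ref{Figure:R3}.

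For the first Reidemeister move, the unique new crossing has over- and under-arcs carrying the same quandle colour $x$ and (by the first axiom of the hierarchical quandle) the same hierarchical colour $y$, so its contribution to $w_\omega$ is $\pm\omega((y,y)^{x,x})$. Since $\omega$ is a 2-cocycle, the first identity in the remark following the definition of $\mathcal{C}^*(\mathcal{X},\mathcal{Y})$ gives $\omega((y,y)^{x,x})=0$, so the weight is preserved.

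For the second Reidemeister move, the two new crossings have opposite signs. Reading off their input labels from figure \ref{Figure:R2}, one computes that the unsigned weights at both crossings equal $\omega((y_1,y_2)^{x_1,x_2})$ (respectively $\omega((y_1/^{x_2}_{x_1}y_2, y_2)^{x_1/x_2,x_2})$ in the other variant, which then equals the weight of the other crossing after applying the definition of $/^{x_2}_{x_1}$). Hence the signed contributions cancel, and $w_\omega$ is unchanged.

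For the third Reidemeister move, the two sides of figure \ref{Figure:R3} share a crossing between the strands labelled $y_2,x_2$ and $y_3,x_3$ whose weight $\omega((y_2,y_3)^{x_2,x_3})$ occurs on both sides and cancels. The remaining difference is precisely
\begin{equation*}
\omega((y_1,y_2)^{x_1,x_2}) + \omega\bigl((y_1*_{x_1}^{x_2}y_2,\,y_3)^{x_1*x_2,\,x_3}\bigr) - \omega((y_1,y_3)^{x_1,x_3}) - \omega\bigl((y_1*_{x_1}^{x_3}y_3,\,y_2*_{x_2}^{x_3}y_3)^{x_1*x_3,\,x_2*x_3}\bigr),
\end{equation*}
which is exactly the second identity in the cocycle remark, hence vanishes by $\partial^3\omega=0$. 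The main obstacle is bookkeeping: one must carefully track the signs of the crossings under each orientation variant of R2 and R3 and verify that the arc labels indicated in the figures correctly match those produced by applying $\partial_3$ to $(y_1,y_2,y_3)^{x_1,x_2,x_3}$; once this matching is made explicit, the three Reidemeister checks reduce directly to the three identities encoded in the 2-cocycle condition.
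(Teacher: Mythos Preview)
Your proof is correct and follows essentially the same approach as the paper: reduce to a single Reidemeister move, use $\omega((y,y)^{x,x})=0$ for R1, cancellation of oppositely signed crossings with identical labels for R2, and the 2-cocycle identity for R3. Your treatment is in fact slightly more explicit than the paper's (you note the common $(y_2,y_3)^{x_2,x_3}$ crossing in R3 and address both orientation variants of R2), but the argument is the same.
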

\begin{proof}
    It's sufficient to prove the theorem only in the case when the diagram $(D_2, \xi_2)$ is obtained from the diagram $(D_1, \xi_1)$ by a Reidemeister move (figures \ref{Figure:R1}, \ref{Figure:R2} and \ref{Figure:R3}).

    \emph{First Reidemeister move}. Let $v$ be the crossing of the diagram $D_2$, obtained after this move. Then $w_{\omega}(v) = 0$, because $\omega((y, y)^{x, x}) = 0$. So $w_{\omega}(D_1, \xi_1, \zeta_1) = w_{\omega}(D_2, \xi_2, \zeta_2)$.

    \emph{Second Reidemeister move}. Let $v_1, v_2$ be two crossings of the diagram $D_2$ obtained after this move. The first is positive, the other negative. Then $w_{\omega}(v_1) = -w_{\omega}(v_2)$. So again $w_{\omega}(D_1, \xi_1, \zeta_1) = w_{\omega}(D_2, \xi_2, \zeta_2)$.

    \emph{Third Reidemeister move}. Let $v_1, v_2, v_3$ be crossings of $D_1$ before this move, and let $u_1, u2_, u_3$ be crossings of $D_2$ after this move. Then 
    \begin{multline*}
        w_{\omega}(v_1) + w_{\omega}(v_2) + w_{\omega}(v_3) = \\ = \omega((y_2, y_3)^{x_2, x_3}) + \omega((y_1, y_3)^{x_1, x_3}) + \omega((y_1 *_{x_1}^{x_3} y_3, y_2 *_{x_2}^{x_3}y_3)^{x_1*x_3, x_2*x_3}),
    \end{multline*}
    \begin{multline*}
        w_{\omega}(u_1) + w_{\omega}(u_2) + w_{\omega}(u_3) = \\ = \omega((y_1 *_{x_1}^{x_2} y_2, y_3)^{x_1*x_2, x_3}) + \omega((y_1, y_2)^{x_1, x_2}) + \omega((y_2, y_3)^{x_2, x_3}).
    \end{multline*}

    Since $\omega$ is the 2-cycle, these sums are equal. So again $w_{\omega}(D_1, \xi_1, \zeta_1) = w_{\omega}(D_2, \xi_2, \zeta_2)$.
\end{proof}

It follows from the theorem \ref{Theorem:CocycleInvariant} that for any coloured link $(K, \varphi)$ and any diagram $(D, \xi)$ of this link, the multi-set of weights $w_{\omega}(D, \xi, \zeta)$ computed for all different colourings $\zeta$ is an invariant of the link. Denote the value of this invariant $\mathcal{W}_{\omega}(K, \varphi)$.

\begin{theorem}
    \label{Theorem:HomologicCocycles}
    Let $(K, \varphi)$ be an oriented coloured link. Then for any two homological 2-cocycles $\omega_1, \omega_2\in\ker \partial^3$: $$\mathcal{W}_{\omega_1}(K, \varphi) = \mathcal{W}_{\omega_2}(K, \varphi).$$
\end{theorem}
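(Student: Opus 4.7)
The plan is to reduce the claim to the vanishing of the weight of a $2$-coboundary on any colouring, and then to read that vanishing off from a telescoping sum along the under-strands. Since $\omega_1$ and $\omega_2$ represent the same cohomology class, I would write $\omega_1 - \omega_2 = \partial^2 \eta$ for some $\eta \in C^1(\mathcal{X}, \mathcal{Y})$. It then suffices to prove that for every coloured diagram $(D, \xi)$ and every hierarchical colouring $\zeta$ one has $w_{\partial^2 \eta}(D, \xi, \zeta) = 0$; this yields the stronger pointwise equality $w_{\omega_1}(D, \xi, \zeta) = w_{\omega_2}(D, \xi, \zeta)$ for each individual $\zeta$, so the two multi-sets $\mathcal{W}_{\omega_1}(K, \varphi)$ and $\mathcal{W}_{\omega_2}(K, \varphi)$ coincide termwise.

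First I would compute $\partial_2$ on a generator. From $l_2 = \lambda_{2, 1} - \lambda_{2, 2}$, $r_2 = \rho_{2, 1} - \rho_{2, 2}$ and $\partial_2 = l_2 - r_2$, the definitions of $\lambda_{2, i}$ and $\rho_{2, i}$ collapse to
\[\partial_2\bigl((y_1, y_2)^{x_1, x_2}\bigr) = (y_1)^{x_1} - (y_1 *_{x_1}^{x_2} y_2)^{x_1 * x_2},\]
so that
\[\partial^2 \eta\bigl((y_1, y_2)^{x_1, x_2}\bigr) = \eta\bigl((y_1)^{x_1}\bigr) - \eta\bigl((y_3)^{x_3}\bigr),\]
where $(y_3, x_3) = (y_1 *_{x_1}^{x_2} y_2,\ x_1 * x_2)$ are the labels of the outgoing under-arc at the crossing shown in figure \ref{Figure:Crossings}.

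Next I would interpret this expression crossing-by-crossing. At a positive crossing $v$, the arc $a_1$ carries the incoming under-colour $y_1$ and $a_3$ the outgoing under-colour $y_3$; the positive sign in $w_{\partial^2 \eta}(v)$ gives a contribution of the form $\eta(\text{in}) - \eta(\text{out})$. At a negative crossing the under-strand flows from $a_3$ to $a_1$, so $y_3$ is the incoming colour and $y_1$ the outgoing one; the additional minus sign in the definition of $w_\omega(v)$ exactly compensates this reversal and again produces $\eta(\text{in}) - \eta(\text{out})$. Summing over all crossings gives a telescoping sum in which each arc $a \in \mathcal{A}(D)$ contributes $+\eta\bigl((\zeta(a))^{\xi(a)}\bigr)$ once (at the crossing where $a$ terminates, as the under-incoming arc) and $-\eta\bigl((\zeta(a))^{\xi(a)}\bigr)$ once (at the crossing where $a$ originates, as the under-outgoing arc). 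The total is zero, which completes the reduction.

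The only delicate step is the sign bookkeeping at negative crossings: one must check that the minus sign in the definition of $w_\omega(v)$ cancels the swap of incoming/outgoing under-arcs, so that both crossing types contribute a difference of the form $\eta(\text{in}) - \eta(\text{out})$. This is routine, and I do not expect any deeper obstacle; the argument is the analogue of the classical coboundary-invariance step for quandle cocycle invariants, adapted to the hierarchical setting via the explicit formula for $\partial_2$ above.
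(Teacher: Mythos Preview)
Your proposal is correct and follows essentially the same route as the paper: reduce to the case of a coboundary $\omega = \partial^2\eta$, observe that the weight at each crossing becomes a difference of values of $\eta$ on the incoming and outgoing under-arcs, and telescope along the under-strands to get zero. Your explicit computation of $\partial_2$ and your check of the sign at negative crossings are slightly more detailed than the paper's; note that your formula $\partial_2\bigl((y_1,y_2)^{x_1,x_2}\bigr) = (y_1)^{x_1} - (y_1 *_{x_1}^{x_2} y_2)^{x_1*x_2}$ carries the opposite overall sign to the one stated in the paper, but yours is the one consistent with the definitions of $l_2,r_2$, and in either convention the telescoping sum vanishes.
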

\begin{proof}
    It's sufficient to prove that if $\omega\in \text{Im\;} \partial^2$, then $\mathcal{W}_{\omega}(K, \varphi) = 0$. Let $\omega = \partial^2(\delta)$ for some $\delta\in C^1(\mathcal{X}, \mathcal{Y})$. Then $$\omega((y_1, y_2)^{x_1, x_2}) = \delta((y_1 *_{x_1}^{x_2} y_2)^{x_1 * x_2}) - \delta((y_1)^{x_1}).$$

    Note that the weight $w_{\omega}(v)$ of any crossing $v$ is equal to the difference $\delta((\alpha)^{a}) - \delta((\beta)^{b})$, where $a$ is the quandle colour and $\alpha$ is the hierarchical quandle colour of the outgoing lower arc, and $b$ is the quandle colour and $\beta$ is the hierarchical quandle colour of the incoming lower arc (figure \ref{Figure:LowerArcs}).

    \begin{figure}[h]
        \begin{center}
            \begin{tikzpicture}[scale=0.75]
                \draw[knot_arrow, knot_diagram] (-1, -1) -- (1, 1);
                \draw[knot_diagram] (1, -1) -- (0.1, -0.1) node[pos=0.5, right] {$\beta$};
                \draw[knot_arrow, knot_diagram] (-0.1, 0.1) -- (-1, 1) node[pos=0.5, left] {$\alpha$};

                \draw (0.8, -1.5) rectangle ++(0.4, 0.4) node[pos=0.5] {$b$};
                \draw (-1.2, 1.1) rectangle ++(0.4, 0.4) node[pos=0.5] {$a$};
            \end{tikzpicture}
            \hspace{2cm}
            \begin{tikzpicture}[scale=0.75]
                \draw[knot_arrow, knot_diagram] (1, -1) -- (-1, 1);
                \draw[knot_diagram] (-1, -1) -- (-0.1, -0.1) node[pos=0.5, left] {$\beta$};
                \draw[knot_arrow, knot_diagram] (0.1, 0.1) -- (1, 1) node[pos=0.5, right] {$\alpha$};

                \draw (-1.2, -1.5) rectangle ++(0.4, 0.4) node[pos=0.5] {$b$};
                \draw (0.8, 1.1) rectangle ++(0.4, 0.4) node[pos=0.5] {$a$};
            \end{tikzpicture}
        \end{center}
        \caption{\label{Figure:LowerArcs}Colours of the lower arcs in the neighbourhood of the crossing}
    \end{figure}
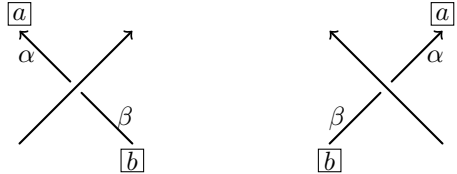

    Therefore the sum over all crossings of the weights $w_{\omega}(v)$ is zero.
\end{proof}

It follows from the theorem \ref{Theorem:HomologicCocycles} that the value of the invariant $\mathcal{W}_{\omega}$ depends only on the cohomological class $[\omega]\in H^2(\mathcal{X, \mathcal{Y}})$.

\section{Quandles from hierarchical quandles}\label{Section:QuandlesForHierarchicalQuandles}

Let $\mathcal{X} = (X, *)$ be a quandle, and let $\mathcal{Y} = (Y, \{*_{x_1}^{x_2}\}_{x_1, x_2\in X})$ be a hierarchical quandle over $\mathcal{X}$. Define the quandle $\mathcal{Q}_{\mathcal{X}, \mathcal{Y}} = (X\times Y, \circ)$, where
\begin{center}
    $(x_1, y_1) \circ (x_2, y_2) = (x_1 * x_2, y_1 *_{x_1}^{x_2} y_2)$ for any $x_!, x_2\in X$ and $y_!, y_2\in Y$.
\end{center}

\begin{theorem}
    \label{Theorem:XY}
    For any quandle $\mathcal{X}$ and hierarchical quandle $\mathcal{Y}$ over $\mathcal{X}$ the quandle $\mathcal{Q}_{\mathcal{X}, \mathcal{Y}}$ is indeed the quandle.
\end{theorem}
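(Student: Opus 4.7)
The plan is to verify directly that the operation $\circ$ on $X \times Y$ satisfies the three quandle axioms, and in each case the verification reduces componentwise to the corresponding axiom for $\mathcal{X}$ in the first coordinate and for $\mathcal{Y}$ in the second coordinate.

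First I would check idempotency. For any $(x,y)\in X\times Y$, the definition gives $(x,y)\circ (x,y) = (x*x,\; y *_{x}^{x} y)$, which equals $(x,y)$ by the first axiom of a quandle (applied to $\mathcal{X}$) and the first axiom of a hierarchical quandle (applied to $\mathcal{Y}$). Next, for invertibility, given $(a,\alpha),(b,\beta)\in X\times Y$, I would solve $(x,y)\circ (a,\alpha) = (b,\beta)$ coordinatewise: the equation $x*a=b$ has a unique solution $x = b/a$ in $X$ by the second quandle axiom, and then with this $x$ fixed, the equation $y *_{x}^{a}\alpha = \beta$ has a unique solution $y = \beta /_{x}^{a}\alpha$ in $Y$ by the second hierarchical-quandle axiom. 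Thus the pair $(x,y)$ exists and is unique.

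For self-distributivity I would expand both sides of
$$\bigl((x_1,y_1)\circ (x_2,y_2)\bigr)\circ (x_3,y_3) = \bigl((x_1,y_1)\circ (x_3,y_3)\bigr)\circ \bigl((x_2,y_2)\circ (x_3,y_3)\bigr)$$
using the definition of $\circ$. The left-hand side becomes
$$\bigl((x_1*x_2)*x_3,\; (y_1 *_{x_1}^{x_2} y_2) *_{x_1*x_2}^{x_3} y_3\bigr),$$
while the right-hand side becomes
$$\bigl((x_1*x_3)*(x_2*x_3),\; (y_1 *_{x_1}^{x_3} y_3) *_{x_1*x_3}^{x_2*x_3}(y_2 *_{x_2}^{x_3} y_3)\bigr).$$
Equality of the first coordinates is precisely the third quandle axiom for $\mathcal{X}$, and equality of the second coordinates is precisely the third axiom in the definition of a hierarchical quandle over $\mathcal{X}$.

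There is essentially no obstacle: the definition of $\mathcal{Q}_{\mathcal{X},\mathcal{Y}}$ was engineered so that each quandle axiom for $\circ$ splits into the matching axiom for $*$ and the matching axiom for the family $\{*_{x_1}^{x_2}\}$. The only point requiring the tiniest care is the order in which one solves the two equations in the invertibility step: the first coordinate must be determined first, because the second-coordinate operation $*_{x}^{a}$ depends on the value of $x$ found from the first coordinate.
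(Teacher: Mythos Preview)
Your proof is correct and follows essentially the same approach as the paper: you verify the three quandle axioms for $\circ$ coordinatewise, reducing each to the corresponding axiom for $\mathcal{X}$ and for the hierarchical quandle $\mathcal{Y}$. The only difference is cosmetic relabelling of variables in the invertibility step, and your closing remark about solving the first coordinate before the second makes explicit a dependence that the paper leaves implicit.
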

\begin{proof} We should check three axioms from the definition \ref{Definition:Quandle}.

    \emph{The first axiom}. For any $x\in X$ and $y\in Y$:
    $$(x, y)\circ (x, y) = (x * x, y *_{x}^{x} y) = (x, y).$$

    \emph{The second axiom}. Let $(a, b), (\alpha, \beta)\in X\times Y$. We should check that there is a unique $(x, y)\in X\times Y$ such that $$(x, y)\circ (a, b) = (\alpha, \beta).$$

    Indeed, $$(x, y)\circ (a, b) = (x * a, y *_{x}^{a} b).$$ Since $\mathcal{X}$ is a quandle, the equation $x * a = \alpha$ has a unique solution $x = \alpha / a$. Since $\mathcal{Y}$ is a hierarchical quandle, the equation $y *_{x}^{a} b = \beta$ also has a unique solution $y = \beta /_{x}^{a} b$.

    \emph{The third axiom}. For any $(x_1, y_1), (x_2, y_2), (x_3, y_3)\in X\times Y$:
    \begin{multline*}
        ((x_1, y_1)\circ (x_2, y_2))\circ (x_3, y_3) = (x_1 * x_2, y_1 *_{x_1}^{x_2} y_2) \circ (x_3, y_3) = \\ = ((x_1 * x_2) * x_3, (y_1 *_{x_1}^{x_2} y_2) *_{x_1 * x_2}^{x_3} y_3) = \\ = ((x_1 * x_3) * (x_2 * x_3), (y_1 *_{x_1}^{x_3} y_3) *_{x_1 * x_3}^{x_2 * x_3} (y_2 *_{x_2}^{x_3} y_3)) = \\ = (x_1 * x_3, y_1 *_{x_1}^{x_3} y_3) \circ (x_2 * x_3, y_2 *_{x_2}^{x_3} y_3) = ((x_1, y_1)\circ (x_3, y_3))\circ ((x_2, y_2)\circ (x_3, y_3)).
    \end{multline*}
\end{proof}

\begin{remark}
    It's clear that if $\mathcal{X}$ is a finite quandle and $\mathcal{Y}$ is a finite hierarchical quandle over $\mathcal{X}$, then for any oriented link $K$ $$|C_{\mathcal{Q}_{\mathcal{X}, \mathcal{Y}}}(K)| = \sum\limits_{\varphi}|C_{\mathcal{Y}}(K, \varphi)|,$$ where the sum is taken over all colourings $\varphi$ of the link $K$ by the quandle $\mathcal{X}$.

    Furthermore, the cohomology group $H^2(\mathcal{X}, \mathcal{Y})$ is isomorphic to the classical second cohomology group $H^2(\mathcal{Q}_{\mathcal{X}, \mathcal{Y}})$ of the quandle $\mathcal{Q}_{\mathcal{X}, \mathcal{Y}}$.

    In general multi-sets of invariants $\{C_{\mathcal{Y}}(K, \varphi)| \varphi\in C_{\mathcal{X}}(K)\}$ and $\{\mathcal{W}_{\omega}(K, \varphi)| \varphi\in C_{\mathcal{X}}(K)\}$ are more powerful than quandle colourings invariant $C_{\mathcal{Q}_{\mathcal{X}, \mathcal{Y}}(K)}$ and cocycle invariant for the quandle $\mathcal{Q}_{\mathcal{X}, \mathcal{Y}}$.
\end{remark}

\section{Further development}\label{Section:FurtherDevelopment}

In this section we describe some tasks that look interesting in the context of hierarchical quandles.

\subsection{Examples} Construct an infinite series of hierarchical quandles over quandles of some special types. It's interesting to find structures that allows to construct new hierarchical quandles. Something like \cite{IIJO} or \cite{AG}.

\subsection{Essential invariants} The constructions of the invariants $C_{\mathcal{Y}}$ and $\mathcal{W}_{\omega}$ are not interesting, because these invariants are mostly the same as the classical quandle colourings and quandle cocycle invariants from \cite{CJKLS}. Is it possible to construct the invariant that is essentially new?

\subsection{Positive cocycle invariants} In the definition of the chain (and corresponding cochain) complex $\mathcal{C}(\mathcal{X}, \mathcal{Y})$ we used the boundary homomorphisms $\partial_n = l_n - r_n$. It's possible to construct another version of the chain complex by using the boundary homomorphisms $\delta_n = l_n + r_n$. It follows from the lemma \ref{Lemma:LR} that $\delta_{n - 1}\circ \delta_n = 0$. Two-cocycles of the corresponding cochain complexes lead to the positive cocycle invariants, which are similar to the invariants from \cite{CG}.

\subsection{Hierarchical quasoids} The notion of the \emph{quasoid} was introduced in \cite{K1}. It's equivalent to the more earlier notion of \emph{ternary algebra} from \cite{N1} and \emph{Niebrzydowski's tribracket} from \cite{NOO, NP}. In \cite{K2} and \cite{N2} the theory of cocycle invariants for quasoids is developed. It's possible to define hierarchical quasoids over other quasoids and to construct cocycle invariants for these hierarchical quasoids. It's also possible to define hierarchical quasoids over quandles and hierarchical quandles over quasoids. It's interesting to study the corresponding cocycle invariants.

Another task is to study the connection between hierarchical quandle/quasoid over quandle/quasoid cocycle invariants and mixed invariants from \cite{K3}.

\subsection{Quantum invariants for coloured links} Let $\mathcal{X} = (X, *)$ be a finite quandle. Fix the family $\{V_x\}_{x\in X}$ of finite dimensional modules, indexed by elements of $X$. Consider the family of operators $R_{x_1, x_2}\colon V_{x_1}\otimes V_{x_2}\to V_{x_2}\otimes V_{x_1 * x_2}$, $x_1, x_2\in X$. This family $\{R_{x_1, x_2}\}_{x_1, x_2\in X}$ is called \emph{coloured Yang -- Baxter operator} if for all $x_1, x_2, x_3\in X$ 
\begin{multline*}
    (R_{x_2, x_3}\otimes id_{(x_1*x_2)*x_3})\circ (id_{x_2}\otimes R_{x_1*x_2, x_3})\circ (R_{x_1, x_2}\otimes id_{x_3}) = \\ = (id_{x_3}\otimes R_{x_1*x_3, x_2*x_3})\circ (R_{x_1, x_3}\otimes id_{x_2*x_3})\circ (id_{x_1}\otimes R_{x_2, x_3}).
\end{multline*}

Each coloured Yang -- Baxter operator assigns to each braid coloured by the quandle $\mathcal{X}$ the homomorphism between the tensor product of suitable modules. It's interesting to find a trace operator for these homomorphisms such that the corresponding value doesn't change under Markov moves of braids. This will allow to construct quantum invariants of coloured links similar to the invariants from \cite{T}.

\bigskip

\end{document}